\newtheorem{tm}{Theorem}[section]
\newtheorem{lm}[tm]{Lemma}
\newtheorem{re}[tm]{Remark}
\newtheorem{df}[tm]{Definition}
\newtheorem{pr}[tm]{Proposition}
\newcommand{\subscripts}[3]{%
  \@mathmeasure\z@\displaystyle{#2}%
  \global\setbox\@ne\vbox to\ht\z@{}\dp\@ne\dp\z@
  \setbox\tw@\box\@ne
  \@mathmeasure4\displaystyle{\copy\tw@_{#1}}%
  \@mathmeasure6\displaystyle{{#2}_{#3}}%
  \dimen@-\wd6 \advance\dimen@\wd4 \advance\dimen@\wd\z@
  \hbox to\dimen@{}\mathop{\kern-\dimen@\box4\box6}%
}
\newcommand{\nn}{\nonumber}
\newcommand{\LL}{\mathcal{L}}
\newcommand{\h}{\mathrm{H}}
\newcommand{\e}{\mathrm{e}}
\newcommand{\del}{\partial}
\newcommand{\ol}{\overline}
\newcommand{\la}{\langle}
\newcommand{\ra}{\rangle}
\newcommand{\dd}{\mathrm{d}}
\begin{document}
\title[Rate of convergence in Trotter's theorem]{Rate of convergence in Trotter's 
approximation theorem and its applications}
\author[Ryuya Namba]{Ryuya Namba}
\date{\today}
\address{Department of Mathematics,
Faculty of Education,
Shizuoka University, 836, Ohya, Suruga-ku, Shizuoka, 422-8529, Japan}
\email{{\tt{namba.ryuya@shizuoka.ac.jp}}}
\subjclass[2010]{41A25, 41A36, 60F05}
\keywords{Trotter's approximation theorem; 
operator semigroup; Berry-Esseen type estimate}

\begin{abstract}
The celebrated Trotter approximation theorem provides a 
 sufficient condition 
for the convergence of a sequence 
of operator semigroups in terms of the corresponding 
sequence of infinitesimal generators. There exist a few results 
on the rate of convergence in Trotter's theorem under some constraints. 
In the present paper,  a new rate of convergence in Trotter's theorem 
in full generality is given. 
Moreover, we see that this rate of convergence works well to
obtain quantitative estimates for some limit theorems in probability theory. 
 \end{abstract}

\maketitle 
%
%


\section{{\bf Introduction and main results}}

There has been a number of interests in approximation theory 
for semigroups of linear operators on Banach spaces among 
several branches of mathematics such as functional analysis, 
partial differential equations, probability theory and so on. 
Trotter provided a remarkably useful sufficient condition 
for the convergence of a sequence 
of operator semigroups in terms of the corresponding 
sequence of infinitesimal generators in \cite{Trotter}. 
Afterwards, several extensions of Trotter's approximation theorem 
have been discussed 
by noting some relations among operator semigroups, resolvents and  generators. 
We refer to e.g., \cite{Kurtz, Kisynski} for related early works 
and \cite{Pazy, Kato, EN} for good textbooks with 
extensive references therein. 

We now recall a general statement of Trotter's approximation theorem according to \cite{Kurtz}. 
In the following, we denote by $\|\mathfrak{A}\|$ the usual operator norm of a 
bounded linear operator $\mathfrak{A}$ defined on some Banach space.  
Let $(B_n, \|\cdot\|_{B_n}), \, n \in \mathbb{N}$, and $(E, \|\cdot\|_E)$ be 
 Banach spaces. Let $P_n : E \to B_n, \, n \in \mathbb{N},$ be a bounded linear operator
 with $\|P_n\| \le 1$. 

\begin{df}
We say that the sequence of pairs $\{(B_n, P_n)\}_{n=1}^\infty$ approximates 
the Banach space $E$ 
if  
$\|P_n f\|_{B_n} \to \|f\|_E$ as $n \to \infty$
 for every $f \in E$.
 \end{df}
The definition above means that each $P_n, \, n \in \mathbb{N},$ 
is regarded as an isomorphism between $B_n$ and $E$ when passing 
to the limit in some sense. Therefore, $P_n$ is occasionally called 
an {\it approximating operator} of $E$. 
Let $f_n \in B_n$ and $f \in E$. 
We also say that 
$f=\lim_{n \to \infty}f_n$
if 
$$
\|f_n-P_nf\|_{B_n} \to 0 
$$ 
as $n \to \infty$. 
We then define 
the limit $\mathfrak{A}$ of a sequence of 
linear operators $\mathfrak{A}_n$ with the domain $\mathrm{Dom}(\mathfrak{A}_n)$
and the range $\mathrm{Ran}(\mathfrak{A}_n)$ in $B_n$ by putting 
$$
\mathfrak{A}f:=\lim_{n \to \infty}\mathfrak{A}_nP_n f
$$
for all $f \in E$, for which this limit exists. We put 
$$
\mathrm{Dom}(\mathfrak{A})=\{f \in E \, | \, \text{there exists }
\lim_{n \to \infty}\mathfrak{A}_nP_n f\}. 
$$
Then we have the following.

\begin{pr}[cf.\,{\cite[Theorem 2.13]{Kurtz}}]
\label{Trotter's approximation}
Let $T_n, \, n \in \mathbb{N},$ be a bounded linear operator on $B_n$ with $\|T_n\| \le 1$.
Let $\{\ell(n)\}_{n=1}^\infty$ be a sequence of positive numbers and 
$\mathfrak{A}_n:=(T_n-I)/\ell(n)$ for $n \in \mathbb{N}.$ 
Suppose that $\ell(n) \to 0$ as $n \to \infty$ and that $\mathfrak{A}$ is defined by the closure of 
the limit $\lim_{n \to \infty}\mathfrak{A}_n$. 
If the domain $\mathrm{Dom}(\mathfrak{A})$ is dense in $E$ and 
the range $\mathrm{Ran}(\lambda-\mathfrak{A})$ is dense in $E$ for some $\lambda>0$, 
then there exists a $C_0$-semigroup $(\mathcal{T}_t)_{t \ge 0}$ on $E$ such that
$$
\lim_{n \to \infty}\|T_n^{[t/\ell(n)]}P_n f - P_n \mathcal{T}_t f\|_{B_n}=0, \qquad t \ge 0, \, f \in E.
$$
\end{pr}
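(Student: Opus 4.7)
The proof naturally splits into identifying the limit semigroup and then proving the telescoped approximation.

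First, I would check that $\mathfrak{A}$ generates a $C_0$-contraction semigroup on $E$. Since each $T_n$ is a contraction, the Neumann series
\[
\bigl(I - \alpha \mathfrak{A}_n\bigr)^{-1} = \frac{k(n)}{k(n)+\alpha}\sum_{j=0}^{\infty}\Bigl(\frac{\alpha}{k(n)+\alpha}\Bigr)^{j} T_n^j
\]
converges for every $\alpha > 0$ and gives $\|(I - \alpha \mathfrak{A}_n)^{-1}\|_{B_n} \le 1$, equivalently $\|(\lambda - \mathfrak{A}_n)g\|_{B_n} \ge \lambda \|g\|_{B_n}$ for $\lambda > 0$ and $g \in B_n$. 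Applied with $g = P_n f$ for $f$ in the pre-closure domain, passing to the limit using the approximating property $\|P_n f\|_{B_n} \to \|f\|_E$ and the definition $\mathfrak{A}_n P_n f \to \mathfrak{A} f$ yields the dissipative estimate $\|(\lambda - \mathfrak{A})f\|_E \ge \lambda \|f\|_E$, which is preserved under closure. Combined with the hypothesized density of $\mathrm{Dom}(\mathfrak{A})$ and of $\mathrm{Ran}(\lambda - \mathfrak{A})$, the Hille--Yosida / Lumer--Phillips theorem then produces a contraction $C_0$-semigroup $(\mathcal{T}_t)_{t\ge 0}$ on $E$ with generator $\mathfrak{A}$.

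Next, writing $m_n := [t/k(n)]$, I would exploit the standard telescoping identity
\[
T_n^{m_n} P_n f - P_n \mathcal{T}_{m_n k(n)} f = \sum_{j=0}^{m_n - 1} T_n^{m_n - 1 - j}\bigl[T_n P_n - P_n \mathcal{T}_{k(n)}\bigr] \mathcal{T}_{j k(n)} f.
\]
Using $\|T_n\| \le 1$, the $B_n$-norm is bounded by $\sum_{j=0}^{m_n - 1} \|[T_n P_n - P_n \mathcal{T}_{k(n)}] \mathcal{T}_{j k(n)} f\|_{B_n}$. For $h \in \mathrm{Dom}(\mathfrak{A})$ I would split the one-step error as
\[
T_n P_n h - P_n \mathcal{T}_{k(n)} h = k(n)\bigl(\mathfrak{A}_n P_n h - P_n \mathfrak{A} h\bigr) - \int_0^{k(n)} P_n\bigl(\mathfrak{A} \mathcal{T}_s h - \mathfrak{A} h\bigr)\, ds,
\]
whose first piece is $o(k(n))$ by the definition of $\mathfrak{A}$ as a limit of $\mathfrak{A}_n$, and whose second piece is $o(k(n))$ by strong continuity of $s \mapsto \mathcal{T}_s \mathfrak{A} h$. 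Substituting $h = \mathcal{T}_{j k(n)} f$ for $f$ in a core and summing over $j$ then yields the desired $o(1)$ bound, provided the single-step $o(k(n))$ estimate is uniform in $j$.

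Finally I would remove the discrepancy between $m_n k(n)$ and $t$ by strong continuity of $(\mathcal{T}_t)$, and pass from $f$ in a core of $\mathfrak{A}$ to arbitrary $f \in E$ via density and the uniform bound $\|T_n^{m_n} P_n\| \le 1$. The main technical obstacle is precisely the uniformity asserted in the previous paragraph: summing $\sim t/k(n)$ copies of a merely pointwise $o(k(n))$ quantity need not be $o(1)$. To fix this I would use compactness of the orbit $\{\mathcal{T}_s f : 0 \le s \le t\}$ in $E$, together with an equicontinuity argument for the family $\{T_n P_n - P_n \mathcal{T}_{k(n)}\}$ on that compact set, to promote pointwise convergence into uniform convergence along the orbit. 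This uniformity-on-compacta step is the technical heart of the proof.
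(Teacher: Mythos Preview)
The paper does not give its own proof of this proposition; it is quoted from Kurtz, and in the proof of Theorem~\ref{Trotter-refine} the existence of $(\mathcal{T}_t)_{t\ge 0}$ is simply attributed to \cite[Theorem~2.13]{Kurtz}. So there is no in-paper proof to compare with directly, but your proposal can be assessed against the resolvent-based machinery the paper deploys for the quantitative Theorem~\ref{Trotter-refine}.

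Your generation step is fine: dissipativity passes to the limit via $\|P_nf\|_{B_n}\to\|f\|_E$, and Lumer--Phillips applies. The gap is in the telescoping step. You need $\|\mathfrak{A}_nP_nh-P_n\mathfrak{A}h\|_{B_n}\to 0$ for $h$ ranging over the orbit $\{\mathcal{T}_sf:0\le s\le t\}$, but the hypothesis only gives this convergence for $h$ in the \emph{pre-closure} domain $D_0$, and $\mathcal{T}_s$ need not preserve $D_0$. Your proposed compactness/equicontinuity fix does not close this: although the orbit is compact in the graph norm and $D_0$ is graph-dense in $\mathrm{Dom}(\mathfrak{A})$, the operators $\mathfrak{A}_nP_n$ have operator norm of order $1/k(n)\to\infty$, so the family $h\mapsto(\mathfrak{A}_nP_n-P_n\mathfrak{A})h$ is not equicontinuous in either the $E$-norm or the graph norm. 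Approximating $h$ in the orbit by $g\in D_0$ with $\|h-g\|_{\mathrm{graph}}<\varepsilon$ still leaves an uncontrolled term $\|\mathfrak{A}_nP_n(h-g)\|_{B_n}\le (2/k(n))\varepsilon$. Phrasing the equicontinuity in terms of the bounded family $\{T_nP_n-P_n\mathcal{T}_{k(n)}\}$ does not help either, since what you actually need is uniform smallness of this quantity \emph{divided by $k(n)$}.

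The standard remedy---and exactly what the paper does in Step~3 of the proof of Theorem~\ref{Trotter-refine}---is to trade generator convergence on $D_0$ for resolvent convergence: writing $g=(\lambda-\mathfrak{A})f$ with $f\in D_0$ and using the uniform bound $\|(\lambda-\mathfrak{A}_n)^{-1}\|\le 1/\lambda$ gives
\[
\|(\lambda-\mathfrak{A}_n)^{-1}P_ng-P_n(\lambda-\mathfrak{A})^{-1}g\|_{B_n}
\le \|(\lambda-\mathfrak{A}_n)^{-1}\|\cdot\|\mathfrak{A}_nP_nf-P_n\mathfrak{A}f\|_{B_n}\to 0,
\]
and density of $\mathrm{Ran}(\lambda-\mathfrak{A}|_{D_0})$ together with the \emph{uniform} resolvent bound extends this to all $g\in E$. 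That uniform bound is precisely the missing equicontinuity. From there one passes to semigroup convergence either via Laplace inversion or, as the paper does, by interposing the bounded-generator semigroup $S_t^{(n)}=e^{t\mathfrak{A}_n}$ and differentiating $s\mapsto S_{t-s}^{(n)}(\lambda-\mathfrak{A}_n)^{-1}P_n\mathcal{T}_s(\lambda-\mathfrak{A})^{-1}$.
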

Note that the contractivity $\|T_n\| \le 1, \, n \in \mathbb{N}$, is imposed   
for a convenience. 
Indeed, Proposition \ref{Trotter's approximation} 
can be stated under slightly weaker assumptions. 
See also \eqref{cond1} in Theorem \ref{Trotter-refine} below.

We should emphasize that Trotter's approximation theorem itself 
did not provide any quantitative estimates 
for the convergences of semigroups. 
To obtain such estimates should be 
one of the main problems of interest in a number of 
parts of approximation theory. 
So some authors have tried to consider this problem. 
As far as we know, Mangino and Rasa gave the first result 
on the rate of convergence 
in Trotter's approximation theorem in \cite{MR}. 
Moreover, Campiti and Tacelli also
established a refinement of 
Trotter's approximation theorem in \cite[Theorem 1.1]{CT}  
under a special condition $B_n \equiv E$ for $n \in \mathbb{N}$.
On the other hand, the assumption for the linear operator 
$T_n, \, n \in \mathbb{N}$, imposed in \cite{CT}
 was not sufficient in general. 
 Therefore, they wrote an additional paper \cite{CTa}, where 
 the result has already improved properly
 and an application to Bernstein operators has been given. 
 See also \cite{CTb} for a related result on the rate of convergence
 in Trotter's theorem. 
 However, we note that the cases 
 where each approximating Banach space $B_n$
 differs for every $n \in \mathbb{N}$ are still left, though
they should have a number of applications of this rate of convergence
to very extensive areas of mathematics. 

Inspired by these circumstances,  we obtain the following rate of convergence, 
which corresponds to a refinement of Proposition \ref{Trotter's approximation}
and is also regarded as a certain extension of \cite[Theorem 1.1]{CT} 
to considerable cases. 

\begin{tm}
\label{Trotter-refine}
Let $B_n, \, n \in \mathbb{N},$ be a Banach space endowed with $\|\cdot\|_{B_n}$ 
and $P_n : E \to B_n, \, n \in \mathbb{N}$, a bounded linear operator with $\|P_n\| \le 1$. 
Suppose that 
$\{(B_n, P_n)\}_{n=1}^\infty$ approximates a Banach space $E$. 
Let $T_n, \, n \in \mathbb{N},$ be a bounded linear operator on $B_n$ satisfying 
\begin{equation}\label{cond1}
\|T_n^k\| \le Me^{\omega k/n}, \qquad n, k \in \mathbb{N},
\end{equation}
for some $M \ge 1$ and $\omega \ge 0$ independent of $k$ and $n$. 
Suppose that $D$ is a dense subspace of $E$ 
and $\mathfrak{A} : (D \subset) \mathrm{Dom}(\mathfrak{A}) \to E$ is a linear operator. 
If $\mathrm{Ran}(\lambda-\mathfrak{A})$ is dense in $E$ for some $\lambda>\omega$,
then the closure of $(\mathfrak{A}, D)$ generates a $C_0$-semigroup
$(\mathcal{T}_t)_{t \geq 0}$ on $E$ satisfying $\|\mathcal{T}_t\| \le Me^{\omega t}$
for $t \ge 0$. 
Moreover, suppose that  
\begin{equation}\label{CT1}
\|n(T_n-I)P_nf\|_{B_n} \leq \varphi_n(f), \qquad f \in D,
\end{equation} 
and the following Voronovskaja-type formula holds:
\begin{equation}\label{CT2}
\|n(T_n -I)P_n f - P_n\mathfrak{A} f\|_{B_n} \leq \psi_n(f), \qquad f \in D,
\end{equation} 
where $\varphi_n : D \to [0, \infty)$ and 
$\psi_n :  D \to [0, \infty)$
are semi-norms on $D$ with $\lim_{n \to \infty}\psi_n(f)=0$ for 
$f \in D$. 
Then, for every $t \geq 0$ and for every increasing $\{k(n)\}_{n=1}^\infty$ 
of non-negative integers, we have
\begin{align}\label{rate of convergence}
\|T_n^{k(n)}P_nf - P_n\mathcal{T}_t f\|_{B_n} 
&\le M\exp(2\omega e^{\omega/n}k(n)/n)
\Big(\frac{\omega}{n}\frac{k(n)}{n} +\frac{\sqrt{k(n)}}{n} \Big)\varphi_n(f)\nn\\
&\hspace{1cm}+M\exp(\omega t_ne^{\omega/n})\Big|\frac{k(n)}{n}-t\Big|\varphi_n(f)\nn\\
&\hspace{1cm}+M\exp(\omega te^{\omega/n})\int_0^t 
\exp(-\omega se^{\omega/n})\psi_n(\mathcal{T}_s f) \, ds
\end{align}
for all $f \in D_0:=\{g \in D \, | \, \mathcal{T}_tg \in D, \, t \ge 0\}$, 
where we put $t_n:=\max\{t, k(n)/n\}$. 
\end{tm}

As is seen, the estimates \eqref{CT1} and \eqref{CT2} 
play important roles when we obtain  \eqref{rate of convergence}. 
The condition \eqref{CT1} corresponds to an estimate of 
the operator norm of the infinitesimal generator of a discrete semigroup itself. 
On the other hand, the condition
\eqref{CT2} indicates the estimate of the norm of the difference between  
the discrete infinitesimal generator and the limiting one, 
which should converge to zero as $n \to \infty$ 
by virtue of Proposition \ref{Trotter's approximation}.

The most typical choice of the sequence $\{k(n)\}_{n=1}^\infty$ is that 
$k(n):=[nt]$ for $n \in \mathbb{N}$ and $t \ge 0$. 
Since it holds that $k(n)=[nt] \le nt$, $t_n=t$ and $|[nt]/n - t| \le 1/n$, 
Inequality \eqref{rate of convergence}  becomes 
$$
\begin{aligned} 
\|T_n^{[nt]}P_nf - P_n\mathcal{T}_t f\|_{B_n} 
&\le M\exp(2\omega te^{\omega/n})
\Big(\frac{\omega t}{n}  +\sqrt{\frac{t}{n}} \Big)\varphi_n(f)
+\frac{M}{n}\exp(\omega t e^{\omega/n})\varphi_n(f)\nn\\
&\hspace{1cm}+M\exp(\omega te^{\omega/n})\int_0^t 
\exp(-\omega se^{\omega/n})\psi_n(\mathcal{T}_s f) \, ds
\end{aligned}
$$
for $f \in D_0$ and $t \ge 0$. 
Moreover, if we can especially take
$M=1$ and $\omega=0$, then the estimate above can be also written as 
the following:
\begin{align} 
&\|T_n^{[nt]}P_nf - P_n\mathcal{T}_t f\|_{B_n} \le    \sqrt{\frac{t}{n}}  \varphi_n(f)
+\frac{1}{n} \varphi_n(f) 
+\int_0^t \psi_ n(\mathcal{T}_sf) \, ds
\label{Trotter-refine-2}
\end{align}
for $f \in D_0$
and $t \ge 0$.

In Section 2, we give the proof of Theorem \ref{Trotter-refine}. 
Since the Banach space $B_n$ on which the operator $T_n$ is defined 
may vary as $n$ does, one may think of the proof as 
more complicated than that  given in \cite{CT}. 
However, we can see that such generality does not essentially affect the proof itself, 
which implies that we can obtain many kinds of rates of convergences under 
various settings. Moreover, we also give a rate of convergence 
under an additional assumption on  the limiting semigroup $(\mathcal{T}_t)_{t \ge 0}$
as an immediate consequence of the main result. 
Section 3 is devoted to applications of Theorem \ref{Trotter-refine} 
to the rates of convergences for central limit theorems (CLTs, in short)
in probability theory. The speed rate of the CLT is called the 
Berry--Esseen type bound and it corresponds to a certain refinement of the CLT. 
So far, a lot of ways to establish this kind of bound are known. 
See e.g., \cite[Chapter XVI]{Feller} for the proof
of the Berry--Esseen type bound based on the convergence 
of characteristic functions. 
On the other hand, there is an alternative representation of the CLT
in terms of the convergence of semigroups, whose proof is given by employing
Trotter's approximation theorem. 
We give the Berry--Esseen type bound for the semigroup CLT by using 
Theorem \ref{Trotter-refine}. As a further problem, we also consider a CLT
for magnetic transition operators on crystal lattices discussed in \cite{Kotani}.
We give a quantitative estimate of the CLT by applying Theorem \ref{Trotter-refine}
as well.


\section{{\bf Proof of Theorem \ref{Trotter-refine}}}
 
The proof of Theorem \ref{Trotter-refine} is given in this section. 
We basically follow the argument given in \cite[Theorem 1.1]{CT}.  
Here, we should pay an attention to the proof 
since the Banach space $B_n$ may vary for each $n \in \mathbb{N}$ in our setting.  
However, as we can see below, somewhat surprisingly, such general settings
do not essentially affect the proof.  
Moreover, it is pointed out in \cite{CTa} that some commutativeness conditions like 
$T_n T_m=T_m T_n, \, m, n \in \mathbb{N}$, play an important role in the proof. 
On the other hand, in our setting, such a condition does not make sense 
since the domains of $T_n$ and $T_m$ may be distinct. 
A key to our proof is a condition that not only $f \in D$ but also $\mathcal{T}_tf \in D$,
$t \ge 0$, holds, which is introduced as an alternative condition in \cite{CTa}.

\begin{proof}[Proof of Theorem \ref{Trotter-refine}]
The existence of the $C_0$-semigroup $(\mathcal{T}_t)_{t \geq 0}$ generated by 
the closure of $(\mathfrak{A}, D)$ has been showed in \cite[Theorem 2.13]{Kurtz}.
Therefore, we concentrate on the proof of  \eqref{rate of convergence}. 
We split the proof into four steps. 

\vspace{2mm}
\noindent
{\bf Step 1}. 
Consider the bounded linear operator $\mathfrak{A}_n=n(T_n-I)$ on $B_n$ 
for $n \in \mathbb{N}$, which generates a $C_0$-semigroup $(S_t^{(n)})_{t \ge 0}$ on $B_n$
given by 
$$
S_t^{(n)}=\e^{t\mathfrak{A}_n}=e^{-nt}\e^{ntT_n}=
e^{-nt}\Big(
\sum_{k=0}^\infty \frac{(nt)^k}{k!}T_n^k\Big).
$$
Note that \eqref{cond1} implies
\begin{equation}
\label{S-est}
\|S_t^{(n)}\| \le 
e^{-nt}\Big(
\sum_{k=0}^\infty \frac{(nt)^k}{k!}\|T_n^k\|\Big) 
\le M\exp\big(nt(e^{\omega/n}-1)\big), 
\qquad n \in \mathbb{N}, \, t \ge 0. 
\end{equation}
Recall that $D_0=\{g \in D \, | \, \mathcal{T}_tg \in D, \, t \ge 0\}$. 
Let $\{k(n)\}_{n=1}^\infty$ be an increasing 
sequence of positive integers and 
$f \in D_0$. We then have
\begin{align}
&\|T_n^{k(n)}P_nf - P_n\mathcal{T}_tf\|_{B_n} \nn\\
&\le \|T_n^{k(n)}P_nf - S_{k(n)/n}^{(n)}P_nf\|_{B_n} \nn\\
&\hspace{1cm}+\|S_{k(n)/n}^{(n)}P_nf - S_t^{(n)}P_n f\|_{B_n}
+\|S_t^{(n)}P_nf - P_n\mathcal{T}_t f\|_{B_n}\nn\\
&=:I_1(n)+I_2(n)+I_3(n). 
\label{est1}
\end{align}
We now try to estimate each term on the right-hand side of \eqref{est1}.

\vspace{2mm}
\noindent
{\bf Step 2}. We here give an estimation of $I_1(n)$. 
By applying \cite[Lemma III.5.1]{Pazy} and an elementary inequality
$e^x-1 \le xe^x$ for $x \ge 0$, we obtain
\begin{align}
&\|T_n^{k(n)}P_nf - S_{k(n)/n}^{(n)}P_nf\|_{B_n} 
=\|\e^{k(n)(T_n-I)}P_n f - T_n^{k(n)}P_n f\|_{B_n} \nn\\
&\le M\exp\big(\omega(k(n)-1)/n\big)
\exp\big(k(n)(e^{\omega/n}-1)\big)\nn\\
&\hspace{1cm}\times\sqrt{k(n)^2(e^{\omega/n}-1)^2+k(n)e^{\omega/n}}\|(T_n-I)P_n f\|_{B_n} \nn \\
&\le \frac{M}{n}\exp\big(\omega(k(n)-1)/n\big)\exp\big(k(n)(e^{\omega/n}-1)\big)\nn\\
&\hspace{1cm}\times\Big( k(n)(e^{\omega/n}-1)+\sqrt{k(n)e^{\omega/n}}\Big)\varphi_n(f) \nn\\
&\le M\exp\big(\omega(k(n)-1)/n\big)\exp\big(e^{\omega/n}\omega k(n)/n\big)
\Big(\frac{\omega}{n}\frac{k(n)}{n}e^{\omega/n}+\frac{\sqrt{k(n)}}{n}e^{\omega/2n}\Big)\varphi_n(f) \nn\\
&= M\exp\big(\omega(e^{\omega/n}+1)k(n)/n\big)e^{-\omega/n}
\Big(\frac{\omega}{n}\frac{k(n)}{n}e^{\omega/n}+\frac{\sqrt{k(n)}}{n}e^{\omega/2n}\Big)\varphi_n(f) \nn \\
&\le M\exp\big(2\omega e^{\omega/n}k(n)/n\big)
\Big(\frac{\omega}{n}\frac{k(n)}{n} +\frac{\sqrt{k(n)}}{n} \Big)\varphi_n(f). 
\label{goal1}
\end{align}
Moreover, the estimation of $I_2(n)$ in \eqref{est1} is given by
\begin{align}
&\|S_{k(n)/n}^{(n)}P_nf - S_t^{(n)}P_n f\|_{B_n} \nn\\
&=\Big\|\int_t^{k(n)/n} S_s^{(n)}\big(n(T_n-I)\big)P_nf \, ds\Big\|_{B_n} \nn\\
&\le M\exp\big(nt_n(e^{\omega/n}-1)\big)\Big|\frac{k(n)}{n}-t\Big|\varphi_n(f)\nn\\
& \le M\exp\big(\omega t_ne^{\omega/n}\big)\Big|\frac{k(n)}{n}-t\Big|\varphi_n(f), 
 \label{goal2}
\end{align} 
where we recall that $t_n:=\max\{t, k(n)/n\}$.

\vspace{2mm}
\noindent
{\bf Step 3.} 
Since both operators $S_{t-s}^{(n)} : B_n \to B_n$ and $P_n \mathcal{T}_t : E \to B_n$
are bounded, we have 
$$
\begin{aligned}
S_t^{(n)}P_n f - P_n \mathcal{T}_t f
&=-\int_0^t \frac{d}{ds}(S_{t-s}^{(n)}P_n \mathcal{T}_s)f \, ds \\
&=\int_0^t (S_{t-s}^{(n)}\mathfrak{A}_n P_n \mathcal{T}_s - 
S_{t-s}^{(n)}P_n \mathcal{T}_s \mathfrak{A}) f \, ds \\
&=\int_0^t S_{t-s}^{(n)}(\mathfrak{A}_n P_n - 
P_n \mathfrak{A})\mathcal{T}_s  f \, ds
\end{aligned}
$$
for $t \ge 0$, where we used the fact that $\mathcal{T}_s$ and its generator $\mathfrak{A}$ commute. 
Therefore, it follows from \eqref{S-est}, \eqref{CT2} and $\mathcal{T}_sf \in D$ that 
\begin{align}
&\|S_t^{(n)}P_n f - P_n \mathcal{T}_t f\|_{B_n}\nn\\
&=\Big\|\int_0^t S_{t-s}^{(n)}(\mathfrak{A}_n P_n - 
P_n \mathfrak{A})\mathcal{T}_s  f \, ds\Big\|_{B_n} \nn\\
&\le M\int_0^t \exp\big(n(t-s)(e^{\omega/n}-1)\big)
\|(\mathfrak{A}_n P_n - 
P_n \mathfrak{A})\mathcal{T}_s  f\|_{B_n} \, ds \nn\\
&\le M\int_0^t \exp\big(\omega(t-s)e^{\omega/n}\big)\psi_n(\mathcal{T}_s f) \, ds\nn\\
&\le M\exp\big(\omega te^{\omega/n}\big)\int_0^t \exp\big(-\omega se^{\omega/n}\big)
\psi_n(\mathcal{T}_s f) \, ds
 \label{goal3}
\end{align}
for $t \ge 0$.

\vspace{2mm}
\noindent
{\bf Step 4}. We combine \eqref{est1} with 
\eqref{goal1}, \eqref{goal2} and \eqref{goal3}. Then we obtain
$$
\begin{aligned}
&\|T_n^{k(n)}P_nf - P_n\mathcal{T}_tf\|_{B_n} \\
&\le M\exp\big(2\omega e^{\omega/n}k(n)/n\big)
\Big(\frac{\omega}{n}\frac{k(n)}{n} +\frac{\sqrt{k(n)}}{n} \Big)\varphi_n(f)\\
&\hspace{1cm}+M\exp\big(\omega t_ne^{\omega/n}\big)\Big|\frac{k(n)}{n}-t\Big|\varphi_n(f)\\
&\hspace{1cm}+M\exp\big(\omega te^{\omega/n}\big)
\int_0^t \exp\big(-\omega se^{\omega/n}\big)\psi_n(\mathcal{T}_s f) \, ds
\end{aligned}
$$
for all $f \in D_0$ and $t \ge 0$,
which is the very desired estimate \eqref{rate of convergence}.  
\end{proof}

Before closing this section, 
we give a corollary of Theorem \ref{Trotter-refine}, under 
an additional assumption that 
the limiting semigroup $(\mathcal{T}_t)_{t \ge 0}$ 
preserves $D$ and the seminorm $\psi_n$ and the limiting semigroup
are commutative in a sense.

\begin{re}\label{Remark}
We assume that $\mathcal{T}_t(D) \subset D$ for $t \ge 0$ and 
$\psi_n(\mathcal{T}_t f) \le \|\mathcal{T}_t\| \psi_n(f)$ for 
$f \in D$ and $t \ge 0$. 
Then we have 
$$
\int_0^t \exp\big(-\omega se^{\omega/n}\big)\psi_n(\mathcal{T}_s f) \, ds \le 
\int_0^t \exp\big(\omega s(1-e^{\omega/n})\big)\psi_n(f) \, ds
\le t \psi_n(f)
$$
for $f \in D$ and $t \ge 0$. 
Therefore, Formula \eqref{rate of convergence} becomes
\begin{align}
&\|T_n^{k(n)}P_nf - P_n\mathcal{T}_t f\|_{B_n}\nn\\
&\le M\exp\big(2\omega e^{\omega/n}k(n)/n\big)
\Big(\frac{\omega}{n}\frac{k(n)}{n} +\frac{\sqrt{k(n)}}{n} \Big)\varphi_n(f)\nn\\
&\hspace{1cm}+M\exp\big(\omega t_ne^{\omega/n}\big)\Big|\frac{k(n)}{n}-t\Big|\varphi_n(f)
+Mt\exp\big(\omega t e^{\omega/n}\big) \psi_n(f)
  \label{Final}
\end{align}
for $f \in D$ and $t \ge 0$. 
\end{re}

\vspace{2mm}


\section{{\bf Applications of Theorem \ref{Trotter-refine}}}

This section is concerned with several applications of the rate of convergence in 
Trotter's approximation theorem to obtain some quantitative estimates for
limit theorems in probability theory. 

\subsection{Quantitative estimates of CLTs}
It is known that the CLT plays a crucial role in probability theory. 
Let $\{\xi_i\}_{i=1}^\infty$ be a sequence of independently and 
identically distributed (i.i.d., in short) 
$\mathbb{Z}^d$-valued random variables given by
$$
\mathbb{P}(\xi_1=\bm{e}_k)=\mathbb{P}(\xi_1=-\bm{e}_k)=\frac{1}{2d}, 
\qquad k=1, 2, \dots, d,
$$
where $\bm{e}_k=(0, \dots, 0, \overbrace{1}^{k\text{th}}, 0, \dots, 0) \in \mathbb{Z}^d$ 
is the unit vector for $k=1, 2, \dots, d$. 

The CLT describes the fluctuation of the random variable defined by
$$
X_n:=\frac{\xi_1+\xi_2+\cdots+\xi_n}{\sqrt{n}},\qquad n \in \mathbb{N},
$$
as $n$ tends to infinity. More precisely, it asserts the convergence 
of the distribution of $X_n$ to the 
$d$-dimensional standard normal distribution $N(\bm{0}, I)$ as $n \to \infty$, 
where $I$ denotes the $d \times d$-identity matrix. 
Note that another representation of the CLT is given in terms of
the convergence of the discrete semigroups associated with $X_n$ 
to the continuous heat semigroup generated by the Laplacian on $\mathbb{R}^d$. 
Note that the assertion above is easily extended 
to the case where
the sequence of $\mathbb{R}^d$-valued i.i.d.~random variables 
$\{\xi_i=(\xi_i^1, \xi_i^2, \dots, \xi_i^d)\}_{i=1}^\infty$ satisfies 
$\mathbb{E}[\xi_1]=\mu \in \mathbb{R}^d$ and 
$\mathrm{Cov}(\xi_1^i, \xi_1^j)=\sigma_{ij}$ 
so that $(\sigma_{ij})_{i, j=1}^d$ forms a positive semidefinite
symmetric matrix, though we need to replace $X_n$ by
$$
X_n=\frac{\xi_1+\xi_2+\cdots+\xi_n - n\mu}{\sqrt{n}},\qquad n \in \mathbb{N}.
$$

As a refinement of the CLT, the {\it Berry--Esseen type bound} 
is well-known, which gives a rate of convergence of the CLT in the parameter $n$. 
We see that the Berry--Esseen type bound is easily obtained by a simple application of Theorem \ref{Trotter-refine}. 
Let us put $B_n \equiv C_\infty(\mathbb{Z}^d)$ for $n \in \mathbb{N}$ 
endowed with the sup-norm $\|\cdot\|_\infty$
and $E=C_\infty(\mathbb{R}^d)$ with $\|\cdot\|_\infty$. Here, 
we denote by 
$C_\infty(M)$ the space of all functions on a topological space $M$
vanishing at infinity. We define a bounded linear operator 
$P_n : C_\infty(\mathbb{R}^d) \to C_\infty(\mathbb{Z}^d), \, n \in \mathbb{N}$, by
 $$ 
 P_nf(x):=f(n^{-1/2}x),\qquad x \in \mathbb{Z}^d.
 $$
 Then we easily see that $\|P_n\| \le 1, \, n \in \mathbb{N}$, and 
 the sequence $\{(C_\infty(\mathbb{Z}^d), P_n)\}_{n=1}^\infty$ approximates the 
 Banach space $(C_\infty(\mathbb{R}^d), \|\cdot\|_\infty)$. 

We put $\mathcal{E}:=\{\pm \bm{e}_1, \pm \bm{e}_2, \dots, \pm\bm{e}_d\}$
and define a linear operator $T_n, \, n \in \mathbb{N},$ on 
$C_\infty(\mathbb{Z}^d)$ by 
$$
T_n f(x) \equiv \mathcal{L} f(x):=\frac{1}{2d}\sum_{\bm{e} \in \mathcal{E}} f(x+\bm{e}), \qquad x \in \mathbb{Z}^d. 
$$
The operator $\mathcal{L}$ is called the {\it transition operator} 
associated with $\{\xi_i\}_{i=1}^\infty$ in the context of probability theory. 
Note that $\|\mathcal{L}\| \le 1$ holds. 

We define a subspace $D$ by 
$$
\begin{aligned}
D:=C_\infty^\infty(\mathbb{R}^d) &= \bigcap_{k=1}^\infty \Big\{ f \in C_\infty(\mathbb{R}^d) \, : \, 
\lim_{|x| \to \infty} \frac{\partial^k f}{\partial x_{i_1} \partial x_{i_2}\cdots \partial x_{i_k}}(x)=0, \\ 
& \hspace{1cm} i_1, i_2, \dots, i_k \in \{1, 2, \dots, d\}\Big\},
\end{aligned}
$$ 
the set of $C^\infty$-functions on $\mathbb{R}^d$ all of whose  
partial derivatives of an arbitrary order vanish at infinity. 
We easily see that $D$ is a dense subspace of $C_\infty(\mathbb{R}^d)$. 
Moreover, it is known that $\mathrm{Ran}(\lambda - \Delta)$ is dense in 
$C_\infty(\mathbb{R}^d)$ for some $\lambda>0$ and 
the closure of $(\mathfrak{A}=\Delta, C_\infty^\infty(\mathbb{R}^d))$ 
generates a heat semigroup $(\mathcal{T}_t=\e^{t\Delta})_{t \ge 0}$ 
(see e.g., \cite[Proposition 4.4.4]{Kal}). 
Here $\Delta=\sum_{i=1}^d (\del^2/\del x_i^2)$ stands for the (negative) 
Laplacian
on $\mathbb{R}^d$. 
Under these settings, 
the CLT can be also written as follows: 
\begin{equation}\label{CLT}
\lim_{n \to \infty}\|\mathcal{L}^{[nt]} P_n f - P_n \e^{t\Delta}f\|_\infty=0, 
\qquad f \in C_\infty^\infty(\mathbb{R}^d), \, t \ge 0. 
\end{equation}

We can show that
\begin{equation}\label{phi1}
\|n(\LL - I)P_n f\|_\infty \le \varphi_n(f) := \|\Delta f\|_\infty
+\frac{d}{6\sqrt{n}}\max_{i=1, 2, \dots, d}\Big\|\frac{\del^3 f}{\del x_i^3}
\Big\|_\infty
\end{equation}
and 
\begin{equation}\label{psi1}
\|n(\LL - I)P_n f - P_n \Delta f\|_\infty \le \psi_n(f):=
\frac{d}{6\sqrt{n}}\max_{i=1, 2, \dots, d}\Big\|\frac{\del^3 f}{\del x_i^3}
\Big\|_\infty
\end{equation}
for $f \in C_\infty^\infty(\mathbb{R}^d)$. 
Indeed, by applying the Taylor formula to the function $f$ at $x/\sqrt{n}$, we have 
$$
\begin{aligned}
&n(\LL - I)P_n f(x) \\
&= \frac{n}{2d}\sum_{\bm{e} \in \mathcal{E}} 
f\Big(\frac{x+\bm{e}}{\sqrt{n}}\Big)  - nf\Big(\frac{x}{\sqrt{n}}\Big)\\
&=\frac{1}{2d}\sum_{\bm{e} \in \mathcal{E}} 
\Bigg\{ \sqrt{n}\sum_{i=1}^d \frac{\del f}{\del x_i}\Big(\frac{x}{\sqrt{n}}\Big)
\bm{e}^i
+\frac{1}{2}\sum_{i, j=1}^d \frac{\del^2 f}{\del x_i \del x_j}\Big(\frac{x}{\sqrt{n}}\Big)
\bm{e}^i \bm{e}^j \\
&\hspace{1cm}+\frac{1}{6\sqrt{n}}\sum_{i, j, k=1}^d 
\frac{\del^3 f}{\del x_i \del x_j \del x_k}(\theta)\bm{e}^i\bm{e}^j\bm{e}^k\Bigg\}\\
&=\frac{1}{2d}\sum_{\bm{e} \in \mathcal{E}} 
\Bigg\{ \sqrt{n}\sum_{i=1}^d \frac{\del f}{\del x_i}\Big(\frac{x}{\sqrt{n}}\Big)
\bm{e}^i
+\frac{1}{2}\sum_{i=1}^d \frac{\del^2 f}{\del x_i^2}\Big(\frac{x}{\sqrt{n}}\Big)
(\bm{e}^i)^2
+\frac{1}{6\sqrt{n}}\sum_{i=1}^d 
\frac{\del^3 f}{\del x_i^3}(\theta)(\bm{e}^i)^3\Bigg\}
\end{aligned}
$$
for any $f \in C_\infty^\infty(\mathbb{R}^d)$ and  some 
$\theta=\theta(\bm{e}) \in \mathbb{R}^d$, where 
$\bm{e}^i, \, i=1, 2, \dots, d$, denotes the $i$th component of $\bm{e}$. 
By virtue of 
$$
\sum_{\bm{e} \in \mathcal{E}}\bm{e}^i=0, \quad 
\sum_{\bm{e} \in \mathcal{E}}(\bm{e}^i)^2=2, \qquad i=1, 2, \dots, d, 
$$ 
we have 
$$
n(\LL - I)P_n f(x) =P_n \Delta f(x)+\frac{1}{12d\sqrt{n}}
\sum_{e \in \mathcal{E}}\sum_{i=1}^d 
\frac{\del^3 f}{\del x_i^3}(\theta)(\bm{e}^i)^3.
$$
Hence, we conclude
$$
\begin{aligned}
\|n(\LL - I)P_n f\|_\infty &\le \| \Delta f \|_\infty 
+\frac{d}{6\sqrt{n}}\max_{i=1, 2, \dots, d}\Big\|\frac{\del^3 f}{\del x_i^3}\Big\|_\infty,\\
\|n(\LL - I)P_nf - P_n \Delta f\|_\infty &\le
\frac{d}{6\sqrt{n}}\max_{i=1, 2, \dots, d}\Big\|\frac{\del^3 f}{\del x_i^3}\Big\|_\infty
\end{aligned}
$$
for all $f \in C_\infty^\infty(\mathbb{R}^d)$,
which are the desired estimates \eqref{phi1} and \eqref{psi1}.

Since $\mathcal{T}_t(D) \subset D$ and 
$\psi_n(\mathcal{T}_tf) \le \psi_n(f)$
hold for $t \ge 0$ and $f \in D$ by definition, Theorem \ref{Trotter-refine} (in particular, 
Equation \eqref{Final} in Remark \ref{Remark}) allows us to establish the following 
refinement of \eqref{CLT}. 

\begin{tm}\label{Thm:BerryEsseen}
Suppose that  $f \in C_\infty^\infty(\mathbb{R}^d)$ and $t \ge 0$. 
Then, there exists a positive constant $C=C(t, f, d)>0$ such that
$$
\begin{aligned}
&\|\mathcal{L}^{[nt]} P_n f - P_n \e^{t\Delta}f\|_\infty\\
& \le \Big(\sqrt{\frac{t}{n}}+\frac{1}{n}\Big)\Big(
\|\Delta f\|_\infty
+\frac{d}{6\sqrt{n}}\max_{i=1, 2, \dots, d}\Big\|\frac{\del^3 f}{\del x_i^3}\Big\|_\infty\Big)
+\frac{td}{6\sqrt{n}}\max_{i=1, 2, \dots, d}\Big\|
\frac{\del^3f }{\del x_i^3}\Big\|_\infty 
\le \frac{C}{\sqrt{n}}
\end{aligned}
$$
for all $n \in \mathbb{N}$.
\end{tm}
This theorem implies that the rate of the convergence in the usual CLT 
is of order $n^{-1/2}$, which is a fundamental result in numerical calculations 
of some discrete approximation schemes of diffusion processes 
such as Brownian motions with values in $\mathbb{R}^d$. 
We should emphasize that a lot of studies to establish error bounds similar to 
Theorem \ref{Thm:BerryEsseen} are known. 
However, the Berry--Esseen type bound for the CLT in terms of semigroups 
has not appeared in existing literatures.
We refer to e.g., \cite[Chapter XVI]{Feller} for the proof
of the Berry--Esseen type bound based on the convergence 
of characteristic functions.

\subsection{Quantitative estimates of CLTs for the magnetic transition operator}

In this subsection, we give another application of Theorem \ref{Trotter-refine}
to find out the rate of convergence of CLTs for magnetic transition operators
on crystal lattices. 
Before fixing the setting, 
we briefly review the magnetic Schr\"odinger operator on $\mathbb{R}^d$. 
Let $B$ be a closed 2-form on $\mathbb{R}^d$, which is called a 
{\it magnetic field} on $\mathbb{R}^d$. 
Let $A$ be a vector potential for $B$, that is, $\dd A=B$, where $\dd$ is the exterior derivative. 
We put $\nabla_A:=\dd-\sqrt{-1}A$. 
Then the {\it magnetic Schr\"odinger operator} is given by $\nabla_A^*\nabla_A$. 
We see that the magnetic field $B$ is periodic with respect to $\mathbb{Z}^d$
if and only if $\sigma^*A-A=\dd f_\sigma, \, \sigma \in \mathbb{Z}^d$
for some $f_\sigma \in C^\infty(\mathbb{R}^d)$. 
Moreover, if it holds that 
$B=\sum_{1 \le i<j \le d}b_{ij}\dd x_i \wedge \dd x_j$ with some $b_{ij} \in \mathbb{R}$, 
we then take a linear vector potential $A=\sum_{i, j=1}^d a_{ij}x_j \dd x_i$, where 
$b_{ij}=a_{ji} - a_{ij}$ for $i, j=1, 2, \dots, d$.

A {\it crystal lattice} is defined to be a covering graph $X=(V, E)$ of a finite graph $X_0=(V_0, E_0)$
whose covering transformation group is isomorphic to $\mathbb{Z}^d$. 
Here, $V$ (resp.\,$V_0$) is the set of all vertices and $E$ (resp.\,$E_0$) 
is the set of all oriented edges of $X$ (resp.\,$X_0$). 
For an edge $e \in E$, we denote by $o(e), t(e), \ol{e}$ the origin, the terminus
and the inverse edge of $e$, respectively. We put 
$E_x:=\{e \in E \, | \, o(e)=x\}$ for $x \in V$. 
Intuitively, a crystal lattice is an infinite graph with a fundamental pattern 
consisting of finite number of edges and vertices, which appears periodically.

Let us consider a discrete analogue of the semigroup generated by
the Schr\"odinger operator with periodic magnetic field. 
Let $p : E \to (0, 1]$ be a $\mathbb{Z}^d$-invariant transition probability on $X$, 
that is, $\sum_{e \in E_x}p(e)=1$ for $x \in V$ and $p(\gamma e)=p(e)$
for $\gamma \in \mathbb{Z}^d$ and $e \in E$. 
Here, $\gamma e$  means the parallel translation of $e$ 
along $\gamma \in \mathbb{Z}^d$. 
Note that $p$ is also induced on the finite quotient graph 
$X_0=\mathbb{Z}^d \backslash X$ through the covering map $\pi : X \to X_0$. 
Then the Perron--Frobenius theorem implies the unique existence of the 
normalized invariant measure $m$ on $V_0$. Namely, $m$ is a positive function on $V_0$ satisfying 
$$
\sum_{e \in (E_0)_x}p(\ol{e})m\big(t(e)\big)=m(x), \qquad x \in V_0, \quad \text{and} \quad
\qquad \sum_{x \in V_0}m(x)=1. 
$$ 
In the present paper, we assume the {\it detailed balanced condition}
$$
p(e)m\big(o(e)\big)=p(\ol{e})m\big(t(e)\big), \qquad e \in E_0.
$$
Then the random walk induced by $p$ is said to be ($m$-){\it symmetric}.  
We define the {\it magnetic transition operator} 
$H_\omega : C_\infty(X) \to C_\infty(X)$ by 
$$
H_\omega f(x):=\sum_{e \in E_x}p(e)\exp\big(\sqrt{-1}\omega(e)\big)f\big(t(e)\big), 
\qquad x \in V,
$$
where $\omega : E \to \mathbb{R}$ is a 1-cochain on $X$ 
satisfying $\omega(\ol{e})=-\omega(e)$ for $e \in E$. 
We set the following technical but natural 
conditions for 1-cochains $\omega : E \to \mathbb{R}$. 

\vspace{2mm}
\noindent
{\bf (A1)}: 
$\omega$ is {\it weakly $\mathbb{Z}^d$-invariant}, that is,
the cohomology class $[\omega] \in \h^1(X, \mathbb{R})$
is $\mathbb{Z}^d$-invariant, where $\h^1(X, \mathbb{R})$ 
is the first cohomology group of $X$. 

\vspace{2mm}
\noindent
{\bf (A2)}: For every $\sigma \in \mathbb{Z}^d$, it holds that 
$$
\sum_{e \in E_x}p(e)\Big(\omega(\sigma^{-1}e) - \omega(e)\Big)=0, \qquad x \in V.
$$

\vspace{2mm}
\noindent
{\bf (A3)}: It holds that 
$\sigma_1 (\sigma_2\omega - \omega)=\sigma_2\omega - \omega$ for
$\sigma_1, \sigma_2 \in \mathbb{Z}^d$. 

\vspace{2mm}
Both  {\bf (A1)} and {\bf (A3)} essentially mean the invariance of 
a 1-cochain $\omega$ under the $\mathbb{Z}^d$-action. 
On the other hand, a 1-cochain satisfying {\bf (A2)} is said to be 
{\it harmonic}, which corresponds to a discrete analogue of 
a harmonic form on Riemannian manifolds. 
In fact, for $b \in \mathbb{R}$, the {\it classical Harper operator} 
on $\mathbb{Z}^2$ defined by 
$$
\begin{aligned}
&(H_bf)(m, n)\\
&:=\frac{1}{4}\Big\{\exp\Big(\frac{1}{2}\sqrt{-1}bn\Big)f(m+1, n)+
\exp\Big(-\frac{1}{2}\sqrt{-1}bn\Big)f(m-1, n)\\
&\hspace{1cm}
+\exp\Big(-\frac{1}{2}\sqrt{-1}bm\Big)f(m, n+1)+
\exp\Big(\frac{1}{2}\sqrt{-1}bn\Big)f(m, n-1)\Big\}
\end{aligned}
$$
for $(m, n) \in \mathbb{Z}^2$
satisfies {\bf (A1)}, {\bf (A2)} and {\bf (A3)}. 
Hence, the operator $H_\omega$ with these conditions is also called 
the {\it generalized Harper operator} on $X$. 

A piecewise linear map $\Phi : V \to \mathbb{R}^d$ is called a 
{\it periodic realization} of a crystal lattice $X$  if 
it satisfies $\Phi(\sigma x)=\Phi(x)+\sigma$ for $x \in V$ and $\sigma \in \mathbb{Z}^d$. 
By noting geometric features of crystal lattices,  
Kotani obtained the following CLT of semigroup type for magnetic transition operators.

\begin{pr}[cf.\,{\cite[Theorem 4]{Kotani}}]
\label{Kotani-CLT}
Let $\Phi_0 : X \to \mathbb{R}^d$ be a periodic realization of $X$ satisfying 
\begin{equation}\label{harmonic}
\sum_{e \in E_x}p(e)\Big\{\Phi_0\big(t(e)\big)-\Phi_0\big(o(e)\big)\Big\}=0,
\qquad x \in V. 
\end{equation}
Suppose that $\omega$ satisfies {\bf (A1)}, {\bf (A2)} and {\bf (A3)}. 
Then, 
there exists a flat Riemannian metric $g$ on $\mathbb{R}^d$,  
a linear vector potential $A=\sum_{i, j=1}^d a_{ij}x_j \, dx_i$ on $(\mathbb{R}^d, g)$ 
and a harmonic {\rm 1}-form $\omega_0$  on $X_0$ such that 
\begin{align}\label{omega}
\omega(e)=-\la {\bf{A}}\Phi_0\big(o(e)\big), v_e\ra_{g} - \frac{1}{2}
\la  {\bf{A}}v_e, v_e\ra_{g}+\pi^*\omega_0(e), \qquad e \in E,
\end{align}
where $v_e:=\Phi_0\big(t(e)\big)
-\Phi_0\big(o(e)\big)$ for $e \in E$ and
${\bf{A}}=(a_{ij})_{i, j=1}^d$. 
Moreover, we have
$$
\lim_{n \to \infty} \|H_{\frac{1}{n}\omega}^{[nt]} P_{n}f
- P_{n}\e^{t\nabla_A^*\nabla_A}f\|_\infty=0
$$
for every $f \in C_\infty^\infty(\mathbb{R}^d)$ and $t \ge 0$, 
where $P_n : C_\infty(\mathbb{R}^d) \to C_\infty(X)$ is an approximation operator 
given by 
$$
P_nf(x):=f\Big(\frac{1}{\sqrt{n}}\Phi_0(x)\Big), \qquad x \in V, \, n \in \mathbb{N}. 
$$

\end{pr}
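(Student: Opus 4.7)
The plan is to derive Proposition \ref{Kotani-CLT} from Proposition \ref{Trotter's approximation}, applied with $T_n:=H_{\omega/n}$, $k(n):=n$, $B_n:=C_\infty(X)$, $E:=C_\infty(\mathbb{R}^d)$ and the stated $P_n$. The argument naturally splits into two stages: first, establishing the structural formula \eqref{omega} for $\omega$ from conditions (A1)--(A3); second, carrying out a Taylor-expansion computation to identify $\mathfrak{A}=\nabla_A^*\nabla_A$ as the limit generator.

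For the decomposition, I would begin from (A1): for each $\sigma\in\mathbb{Z}^d$ there exists $f_\sigma\in C^\infty(X)$ with $\sigma^*\omega-\omega=df_\sigma$. Condition (A3) forces $f_\sigma$ to be affine in $\Phi_0$ and linear in $\sigma$, which identifies a linear endomorphism $\mathbf{A}=(a_{ij})$ of $\mathbb{R}^d$; matching the cocycle identity for $f_{\sigma_1+\sigma_2}$ then pins down the symmetric quadratic correction $-\tfrac12\langle\mathbf{A}v_e,v_e\rangle_g$. Subtracting the linear and quadratic contributions from $\omega$ produces a strictly $\mathbb{Z}^d$-invariant 1-cochain on $X$, which descends to $X_0$; assumption (A2) together with the harmonic realization property \eqref{harmonic} then forces the descended cochain to be harmonic with respect to the Albanese flat metric $g$ defined by $g^{ij}=\sum_{e\in(E_0)_x}p(e)v_e^iv_e^j$, yielding the harmonic 1-form $\omega_0$.

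For the semigroup convergence, contractivity $\|H_{\omega/n}\|\le 1$ is immediate from $|e^{\sqrt{-1}\omega(e)/n}|=1$ and $\sum_{e\in E_x}p(e)=1$. The heart of the argument is a Taylor expansion of
\begin{equation*}
n(H_{\omega/n}-I)P_nf(x)=n\sum_{e\in E_x}p(e)\Bigl(e^{\sqrt{-1}\omega(e)/n}f\bigl(\tfrac{\Phi_0(o(e))+v_e}{\sqrt n}\bigr)-f\bigl(\tfrac{\Phi_0(o(e))}{\sqrt n}\bigr)\Bigr)
\end{equation*}
at points where $\Phi_0(x)/\sqrt n\to y$. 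The decomposition \eqref{omega} yields $\omega(e)=-\sqrt n\langle\mathbf{A}y,v_e\rangle_g+O(1)$, so expanding the phase to second order in $1/n$ and the spatial shift $f(\cdot+v_e/\sqrt n)$ to second order in $1/\sqrt n$ produces contributions of sizes $\sqrt n$, $1$, and $o(1)$. The $O(\sqrt n)$ pieces cancel after summing over $E_x$ thanks to $\sum_e p(e)v_e=0$ in \eqref{harmonic}, while the surviving $O(1)$ terms assemble---via the Albanese identity $\sum_e p(e)v_e^iv_e^j=g^{ij}$ and the detailed balance---into exactly $P_n\nabla_A^*\nabla_A f(x)$. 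Standard essential self-adjointness results for magnetic Schr\"odinger operators with linear vector potential supply the core and range-density hypotheses, so Proposition \ref{Trotter's approximation} concludes.

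The main obstacle is the algebraic matching in the first stage: isolating the symmetric versus antisymmetric pieces of the linear cocycle so as to reproduce precisely the quadratic correction in \eqref{omega}, and verifying that the subtracted 1-cochain is genuinely harmonic (not merely closed) on $X_0$, which is where the full force of (A2) enters. In the second stage the delicate point is that the $\sqrt n$-growth of $\omega(e)$ inherited from $\Phi_0(o(e))$ in \eqref{omega} must conspire with the $1/\sqrt n$ spatial scale and the $1/n$ phase scale to produce the magnetic drift term of $\nabla_A^*\nabla_A$, rather than any spurious zeroth- or first-order contribution.
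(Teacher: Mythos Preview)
The paper does not actually give its own proof of Proposition~\ref{Kotani-CLT}: the statement is quoted from \cite[Theorem~4]{Kotani}, and in particular the structural decomposition \eqref{omega} of $\omega$ is taken as an input from Kotani's paper rather than derived here from (A1)--(A3). What the present paper does contribute is the Taylor-expansion computation in the proof of Lemma~\ref{Lemma}, which establishes the quantitative generator estimates \eqref{Sch-1} and \eqref{Sch-2}; this is precisely your ``second stage,'' and your outline there (expand the phase using \eqref{omega}, expand $f$ to second order, kill the $\sqrt{n}$-terms via \eqref{harmonic} and (A2), and match the surviving terms with $\nabla_A^*\nabla_A$) agrees with the argument the paper actually carries out. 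The paper then refers back to \cite[pp.~473--474]{Kotani} for the algebraic identification of the $O(1)$ terms with the magnetic Schr\"odinger operator, so even there the details are outsourced.

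Thus your proposal is more ambitious than what the paper itself does: you attempt to sketch the cohomological/cocycle argument behind \eqref{omega}, which the paper simply imports. Your sketch of that first stage is plausible in outline but is exactly where the real work lies (as you acknowledge); it would need the full machinery of \cite{Kotani} to make rigorous, in particular the identification of the Albanese metric and the harmonic-form representative. One small slip: in invoking Proposition~\ref{Trotter's approximation} you want $k(n)=1/n$, not $k(n)=n$, so that $\mathfrak{A}_n=n(T_n-I)$ and $T_n^{[t/k(n)]}=T_n^{[nt]}$; your later formulas make clear this is what you intend.
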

We note that, if $\omega=0$, then the operator $\nabla_A^*\nabla_A$ becomes 
the usual (negative) Laplacian $\Delta$ on $(\mathbb{R}^d, g)$. 
The flat metric $g$ on $\mathbb{R}^d$ above is called the {\it Albanese metric}. 
See e.g., \cite{KS06} for its geometric meaning as well as 
its explicit construction. 

By applying Theorem \ref{Trotter-refine}, we show the following 
quantitative estimate of Proposition \ref{Kotani-CLT}. 

\begin{tm}\label{Kotani-Trotter}
For $f \in C_\infty^\infty(\mathbb{R}^d)$ and $t \ge 0$, there exists a positive constant 
$C=C(t, f, \Phi_0)>0$ such that
$$
\|H_{\frac{1}{n}\omega}^{[nt]} P_{n}f
- P_{n}\e^{t\nabla_A^*\nabla_A}f\|_\infty \le \frac{C}{\sqrt{n}}, \qquad n \in \mathbb{N}. 
$$
\end{tm}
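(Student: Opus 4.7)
The plan is to apply Theorem \ref{Trotter-refine} following the same outline as the proof of the Berry--Esseen bound in Section 3.1, now with $B_n \equiv C_\infty(X)$ for all $n$, $E = C_\infty(\mathbb{R}^d)$, $T_n = H_{\omega/n}$, $\mathfrak{A} = \nabla_A^*\nabla_A$, and $D = C_c^\infty(\mathbb{R}^d)$. Since $|e^{\sqrt{-1}\omega(e)/n}| = 1$ and $\sum_{e \in E_x} p(e) = 1$, the contractivity $\|H_{\omega/n}\| \le 1$ is immediate, so the growth condition \eqref{cond1} holds with $M = 1$ and growth constant equal to $0$, enabling use of the simplified form \eqref{Trotter-refine-2}. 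Density of $\mathrm{Ran}(\lambda - \nabla_A^*\nabla_A)$ in $C_\infty(\mathbb{R}^d)$ for some $\lambda > 0$ follows from the fact, established in \cite{Kotani} via Proposition \ref{Kotani-CLT}, that $\nabla_A^*\nabla_A$ generates a $C_0$-semigroup on $C_\infty(\mathbb{R}^d)$.

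The heart of the argument is the construction of semi-norms $\varphi_n$ and $\psi_n$ with $\psi_n(f) = O(1/\sqrt{n})$ and $\varphi_n(f) = O(1)$ satisfying \eqref{CT1} and \eqref{CT2}. Fix $f \in C_c^\infty(\mathbb{R}^d)$ and $x \in V$, and set $y_n := \Phi_0(x)/\sqrt{n}$ together with $v_e := \Phi_0(t(e)) - \Phi_0(o(e))$. Then
$$n(H_{\omega/n} - I)P_n f(x) = n\sum_{e \in E_x} p(e)\Big[e^{\sqrt{-1}\omega(e)/n} f\Big(y_n + \frac{v_e}{\sqrt{n}}\Big) - f(y_n)\Big].$$
I would Taylor-expand $f$ to third order around $y_n$ in increments of $1/\sqrt{n}$ and $e^{\sqrt{-1}\omega(e)/n}$ to second order in $1/n$. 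The only potentially divergent contribution, of order $\sqrt{n}$, is $\sqrt{n}\sum_{e \in E_x} p(e)\,\nabla f(y_n) \cdot v_e$, which vanishes by the harmonicity condition \eqref{harmonic} on $\Phi_0$. The $O(1)$ contributions must then reorganize into $P_n \nabla_A^*\nabla_A f(x)$; this reorganization is essentially the content of \cite{Kotani} and hinges on the decomposition \eqref{omega} of $\omega$ together with assumptions (A1), (A2), (A3). What remains is a Taylor remainder of order $1/\sqrt{n}$, controlled uniformly in $x$ by $\max_{|\alpha| \le 3}\|\partial^\alpha f\|_\infty$ and by finitely many sums over the finite quotient graph $X_0$ of quantities of the form $|v_e|^k |\omega(e)|^\ell$. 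This yields $\psi_n(f) \le C(\Phi_0, \omega, f)/\sqrt{n}$ and $\varphi_n(f) \le \|\nabla_A^*\nabla_A f\|_\infty + \psi_n(f)$, both finite.

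Substituting these bounds into \eqref{Trotter-refine-2} with $k(n) = [nt]$ makes each of the four terms on the right-hand side of that inequality of order $1/\sqrt{n}$, which gives the assertion with $C = C(t, f, \Phi_0, \lambda)$. The main obstacle is the Taylor bookkeeping just outlined: one must track expansions in $1/\sqrt{n}$ (for $f$) and $1/n$ (for the phase) simultaneously, and invoke the algebraic identities coming from assumptions (A1), (A2), (A3), the harmonicity of $\Phi_0$, and the structure of \eqref{omega} to identify the $O(1)$ coefficient with $\nabla_A^*\nabla_A$. Since this identification is already performed in \cite{Kotani}, the genuinely new ingredient here is only the explicit uniform control of the third-order remainder, which causes no difficulty on $C_c^\infty(\mathbb{R}^d)$: compact support makes all derivatives of $f$ bounded, while the sums over $E_x$ descend to finite sums over $E_0 = \mathbb{Z}^d \backslash E$.
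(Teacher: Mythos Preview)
Your proposal is correct and follows essentially the same route as the paper: the paper isolates the Taylor expansion you describe as Lemma~\ref{Lemma} (yielding \eqref{Sch-1} and \eqref{Sch-2}, i.e.\ your $\varphi_n$ and $\psi_n$), and then invokes Theorem~\ref{Trotter-refine} in the contractive form \eqref{Trotter-refine-2} exactly as you do. One small caution on your bookkeeping: because of \eqref{omega} the cochain $\omega(e)$ grows like $|\Phi_0(o(e))|=O(\sqrt{n})$ on the support of $P_nf$, so the phase $e^{\sqrt{-1}\omega(e)/n}$ must be expanded in powers of $1/\sqrt{n}$ (not $1/n$), there is a second $O(\sqrt{n})$ contribution coming from the phase besides the gradient term you name (both vanish by $\sum_e p(e)v_e=0$), and the remainder is controlled not by raw powers of $|\omega(e)|$ (which do not descend to $X_0$) but by the bounded pieces $v_e$, $\langle\mathbf{A}v_e,v_e\rangle_g$, $\pi^*\omega_0(e)$ and $\langle\mathbf{A}y_n,v_e\rangle_g$ arising from \eqref{omega}.
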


Before giving the proof of Theorem \ref{Kotani-Trotter}, 
we show the following lemma.

\begin{lm}\label{Lemma}
Let $\Phi_0 : V \to \mathbb{R}^d$ be a periodic realization satisfying \eqref{harmonic}. 
Then there exists a positive constant $C=C(f, \Phi_0)>0$ such that 
\begin{equation}\label{Sch-1}
\|n(H_{\frac{1}{n}\omega} - I)P_n f\|_\infty \le \varphi_n(f)=
\big\|(\nabla_A^*\nabla_A)f\big\|_\infty + \frac{C}{\sqrt{n}},
\qquad f \in C_\infty^\infty(\mathbb{R}^d),
\end{equation}
and 
\begin{equation}\label{Sch-2}
\|n(H_{\frac{1}{n}\omega} - I)P_n f - P_n(\nabla_A^*\nabla_A)f\|_\infty \le \psi_n(f)=
\frac{C}{\sqrt{n}},
\qquad f \in C_\infty^\infty(\mathbb{R}^d).
\end{equation}
\end{lm}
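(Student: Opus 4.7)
The approach is a pointwise two-scale Taylor expansion at each vertex $x \in V$. Fix $x$, set $y := n^{-1/2}\Phi_0(x)$, and note that for every $e \in E_x$ one has $\Phi_0(t(e))/\sqrt{n} = y + n^{-1/2} v_e$, where $v_e = \Phi_0(t(e)) - \Phi_0(o(e))$ takes only finitely many values as $e$ ranges over representatives in the fundamental domain. Taylor expanding $f \in C_c^\infty(\mathbb{R}^d)$ to third order gives
\[
f\bigl(y + n^{-1/2} v_e\bigr) = f(y) + \tfrac{1}{\sqrt{n}}\sum_i \partial_i f(y)\,v_e^i + \tfrac{1}{2n}\sum_{i,j} \partial_{ij}^2 f(y)\, v_e^i v_e^j + R_3(y,e,n),
\]
with $|R_3| \le C_1 n^{-3/2}$ depending only on $\sup_e |v_e|^3$ and $\max_{|\alpha|=3}\|\partial^\alpha f\|_\infty$. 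Using the representation \eqref{omega}, one rewrites
\[
\frac{\omega(e)}{n} = -\frac{1}{\sqrt{n}}\la \bm{A}y, v_e\ra - \frac{1}{2n}\la \bm{A}v_e, v_e\ra + \frac{1}{n}\pi^*\omega_0(e),
\]
which is uniformly $O(n^{-1/2})$ on the support of $P_n f$, and expands $e^{\sqrt{-1}\omega(e)/n} = 1 + \sqrt{-1}\omega(e)/n - \omega(e)^2/(2n^2) + O(n^{-3/2})$.

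Plugging both expansions into
\[
n(H_{\frac{1}{n}\omega} - I)P_n f(x) = n\sum_{e \in E_x} p(e)\bigl[e^{\sqrt{-1}\omega(e)/n}f(y + n^{-1/2}v_e) - f(y)\bigr]
\]
and collecting in powers of $n^{-1/2}$, the potentially $O(\sqrt{n})$ contributions are all proportional to $\sum_{e \in E_x} p(e)\,v_e$: namely the linear Taylor term $\sqrt{n}\,df(y)\bigl(\sum_e p(e) v_e\bigr)$, and the zeroth-Taylor/first-$\omega$ term $-\sqrt{-1}\,f(y)\,\sqrt{n}\,\la\bm{A}y,\sum_e p(e) v_e\ra$. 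Both vanish by the harmonicity hypothesis \eqref{harmonic}. The surviving $O(1)$ terms consist of the Hessian piece $\tfrac{1}{2}\sum_e p(e)\,\partial_{ij}^2 f(y)\,v_e^i v_e^j$, the cross term $-\sqrt{-1}\sum_e p(e) \la\bm{A}y, v_e\ra\,df(y)(v_e)$, and scalar multiples of $f(y)$ built from $\la\bm{A}v_e, v_e\ra$, $\la\bm{A}y, v_e\ra^2$ and $\pi^*\omega_0(e)$. By the construction of the Albanese metric $g$ and the matching carried out in Proposition \ref{Kotani-CLT}, these sum to exactly $P_n(\nabla_A^*\nabla_A f)(x)$, so the remainder is bounded by $C_2 n^{-1/2}$ times finitely many norms $\|\partial^\alpha f\|_\infty$, $|\alpha| \le 3$, giving \eqref{Sch-2}. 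Estimate \eqref{Sch-1} then follows from \eqref{Sch-2} by the triangle inequality together with $\|P_n\| \le 1$.

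The main obstacle is the algebraic bookkeeping required to identify the $O(1)$ part with $\nabla_A^*\nabla_A f$: one needs that $\sum_{e \in E_x} p(e)\,v_e \otimes v_e$ is precisely the (dual) Albanese inner product, that the cross term reassembles as the first-order magnetic term of $\nabla_A^*\nabla_A$, and that the scalar correction yields the magnetic potential $|A|_g^2 f$ modulo the contribution of $\pi^*\omega_0$, which is controlled by conditions \textbf{(A1)}--\textbf{(A3)} and the harmonicity of $\omega_0$. Since the qualitative identification already takes place in Proposition \ref{Kotani-CLT}, the remaining task for Lemma \ref{Lemma} is only to verify that the constants appearing in every error term depend on nothing beyond $\Phi_0$, $p$, and a finite number of sup-norms $\|\partial^\alpha f\|_\infty$ ($|\alpha|\le 3$); the compact support of $f$ then promotes the pointwise $O(n^{-1/2})$ remainder to the uniform sup-norm bound claimed in \eqref{Sch-1} and \eqref{Sch-2}.
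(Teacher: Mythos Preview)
Your proposal is correct and follows essentially the same route as the paper: a double Taylor expansion of $e^{\sqrt{-1}\omega(e)/n}$ (via the representation \eqref{omega}) and of $f$ at $n^{-1/2}\Phi_0(x)$, cancellation of the $O(\sqrt{n})$ terms by the harmonicity condition \eqref{harmonic}, and deferral of the algebraic identification of the $O(1)$ part with $P_n(\nabla_A^*\nabla_A)f$ to Kotani's computation (where the ergodic theorem on $\ell^2(X_0)$ and the definition of the Albanese metric enter). The only cosmetic difference is that you obtain \eqref{Sch-1} from \eqref{Sch-2} by the triangle inequality and $\|P_n\|\le 1$, whereas the paper reads both off the same expansion; this is immaterial.
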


\begin{proof}
By applying the Taylor formula to $\exp(\sqrt{-1}\omega(e)/n)$
and by noting \eqref{omega}, we have  
$$
\begin{aligned}
\exp\Big(\frac{\sqrt{-1}}{n}\omega(e)\Big)
&=1-\frac{\sqrt{-1}}{\sqrt{n}}
\Big\la {\bf{A}}\Big(\frac{1}{\sqrt{n}}\Phi_0\big(o(e)\big)\Big), v_e\Big\ra_g\\
&\hspace{1cm}-\frac{1}{2n}\Bigg(\sqrt{-1}\la {\bf{A}}v_e, v_e\ra_g+2\sqrt{-1}\pi^*\omega_0(e)\\
&\hspace{1cm}+\Big\la {\bf{A}}\Big(\frac{1}{\sqrt{n}}\Phi_0\big(o(e)\big)\Big), v_e\Big\ra_g^2\Bigg)+J_n(\Phi_0, e), 
\end{aligned}
$$
where $J_n(\Phi_0, e)$ satisfies that $|J_n(\Phi_0, e)| \le Cn^{-3/2}$
for some $C=C(\Phi_0)>0$ independent of $e \in E$. 
Denote by $\bm{x}_i$ the $i$th
coefficient of $\bm{x} \in \mathbb{R}^d$ with respect to the Albanese
metric.   
Then, another use of the Taylor formula gives
\begin{align}
n(H_{\frac{1}{n}\omega} - I)P_n^H f
&=-\sqrt{n}\sum_{e \in E_x}p(e)\Bigg\{\sqrt{-1}
\Big\la {\bf{A}}\Big(\frac{1}{\sqrt{n}}\Phi_0(x)\Big), v_e\Big\ra_g
f\Big(\frac{1}{\sqrt{n}}\Phi_0(x)\Big)\nn\\
&\hspace{1cm}+\sum_{i=1}^d \frac{\del f}{\del x_i}\Big(\frac{1}{\sqrt{n}}\Phi_0(x)\Big)(v_e)_i
\Bigg\}\nn\\
&\hspace{1cm}+\frac{1}{2}\sum_{e \in E_x}p(e)
\Bigg\{ \frac{\del^2 f}{\del x_i \del x_j}\Big(\frac{1}{\sqrt{n}}\Phi_0(x)\Big)(v_e)_i(v_e)_j \nn\\
&\hspace{1cm}-2\sqrt{-1}\Big\la {\bf{A}}
\Big(\frac{1}{\sqrt{n}}\Phi_0(x)\Big), v_e\Big\ra_g 
\sum_{i=1}^d \frac{\del f}{\del x_i}\Big(\frac{1}{\sqrt{n}}\Phi_0(x)\Big)(v_e)_i \nn\\
&\hspace{1cm}-\frac{1}{2}
\Big(\sqrt{-1}\la {\bf{A}}v_e, v_e\ra_g+2\sqrt{-1}\pi^*\omega_0(e) \nn\\
&\hspace{1cm}+\Big\la {\bf{A}}\Big(\frac{1}{\sqrt{n}}\Phi_0(x)\Big), v_e\Big\ra_g^2\Big)
f\Big(\frac{1}{\sqrt{n}}\Phi_0(x)\Big)\Bigg\}+\widetilde{J}_n(\Phi_0, x), 
\label{Taylor}
\end{align}
where $\widetilde{J}_n(\Phi_0, f, x)$ satisfies 
$\|\widetilde{J}_n(\Phi_0, f, \cdot)\|_\infty \le Cn^{-1/2}$ for some $C>0$. 
We easily see that the first term of the right-hand side of \eqref{Taylor} is zero since
$$
\sum_{e \in E_x}p(e)v_e=0 \quad \text{and} \quad 
\sum_{e \in E_x}p(e)\omega(e)=0, \qquad x \in V.
$$
As for the second term of the right-hand side of \eqref{Taylor}, 
we can show that it is equal to 
$$
\begin{aligned}
&-\sum_{i=1}^d \frac{\del^2 f}{\del x_i^2}\Big(\frac{1}{\sqrt{n}}\Phi_0(x)\Big)
+2\sqrt{-1}\sum_{i, j=1}^d a_{ij}x_j\frac{\del f}{\del x_i}\Big(\frac{1}{\sqrt{n}}\Phi_0(x)\Big)\\
&\hspace{1cm}+\Big(\sqrt{-1}\sum_{i=1}^d a_{ii}+\sum_{i=1}^d 
\Big(\sum_{j=1}^d a_{ij}x_j\Big)^2\Big)f\Big(\frac{1}{\sqrt{n}}\Phi_0(x)\Big)
+\widetilde{J}'_n(\Phi_0, f, x)\\
&=P_n(\nabla_A^*\nabla_A)f(x)+\widetilde{J}'_n(\Phi_0, f, x)
\end{aligned}
$$
by following the same discussion as \cite[pp. 473 and 474]{Kotani}, 
where $\widetilde{J}'_n(\Phi_0, f, x)$ satisfies 
$\|\widetilde{J}'_n(\Phi_0, f, \cdot)\|_\infty \le Cn^{-1/2}$ for some $C>0$. 
We note that the ergodic theorem for the transition operator 
acting on $\ell^2(X_0)=\{f : V_0 \to \mathbb{C}\}$ plays a crucial role.  
This completes the proof.
\end{proof}

Let $C_x([0, t], \mathbb{R}^d)$ be the set of all continuous functions
$w : [0, t] \to \mathbb{R}^d$ with $w(0)=x\in \mathbb{R}^d$
and $\mu$ the usual Wiener measure on $C_x([0, t], \mathbb{R}^d)$. 
The Schr\"odinger semigroup $(\e^{t\nabla_A^*\nabla_A})_{t \ge 0}$ 
acts on $C_\infty(\mathbb{R}^d)$ and it
is represented as 
$$
(\e^{t \nabla_A^*\nabla_A}f)(x)
=\int_{C_x([0, t], \mathbb{R}^d)}\exp\Big(\sqrt{-1}
\int_0^t A\big(w(s)\big) \circ dw(s)\Big)
f\big(w(t)\big) \, \mu(dw) 
$$
for every $f \in C_\infty(\mathbb{R}^d)$
by virtue of the Feynman--Kac formula, where $\circ dw(s)$ denotes 
the Stratonovich integral. 
Moreover, we verify that  
$\mathrm{Ran}(\lambda - \nabla_A^*\nabla_A)$ is dense
in $C_\infty(\mathbb{R}^d)$ for some $\lambda>0$ and the closure of 
$(\nabla_A^*\nabla_A, C_\infty^\infty(\mathbb{R}^d))$ generates the 
Schr\"odinger semigroup $(\e^{t\nabla_A^*\nabla_A})_{t \ge 0}$ 
(see \cite[Section 1]{Kotani}). 
Since it holds that $(\e^{t\nabla_A^*\nabla_A})(D) \subset D$ and 
$\psi_n(\e^{t\nabla_A^*\nabla_A}f) \le \psi_n(f)$ for $t \ge 0$ and $f \in D$, 
Theorem \ref{Kotani-Trotter} is obtained as a consequence of 
\eqref{Sch-1} and \eqref{Sch-2} in Lemma \ref{Lemma}. 

As far as we know, there seems to be no results establishing the rate of convergence 
of the (generalized) Harper operators to the magnetic Schr\"odinger oeprator. 
Hence, Theorem \ref{Kotani-Trotter} gives a new contribution 
to the study of magnetic Schr\"odinger operators on periodic graphs. 
Since our main result (Theorem \ref{Trotter-refine}) is given 
in full generality, we expect further applications of it in various settings.

\begin{re}
The periodic realization $\Phi_0$ satisfying \eqref{harmonic} is called the 
{\it harmonic realization}, which was introduced in \cite{KS00-TAMS} 
and was regarded as a discrete analogue of harmonic maps on Riemannian manifolds. 
It also describes the most natural configurations of a crystal from a geometric perspective. 
We note that Theorem {\rm \ref{Kotani-Trotter}} as well as 
Proposition {\rm \ref{Kotani-CLT}} holds even when the given realization $\Phi$ is not always 
harmonic, since the difference $|\Phi(x)- \Phi_0(x)|$ is uniformly bounded 
in $x \in V$ due to the periodicities. See also \cite[Section 4]{Kotani} for related discussions. 
\end{re}


\vspace{2mm}
{\bf Acknowledgements}. 
The author would like to thank the anonymous referee for 
providing valuable comments and suggestions. 
This work is supported by JSPS KAKENHI Grant No.\
19K23410.

\end{document}